\newtheorem{theorem}{Theorem}
\newtheorem{lemma}[theorem]{Lemma}
\newtheorem{remark}[theorem]{Remark}
\newenvironment{proof}[1][Proof]{\noindent\textbf{#1.} }{\ \rule{0.5em}{0.5em}}
\begin{document}

\title{Trivial unit conjecture and homotopy theory}
\author{Shengkui Ye}
\maketitle

\begin{abstract}
A homotopy theoretic description is given for trivial unit conjecture in the
group ring $\mathbb{Z}G.$
\end{abstract}

\section{Introduction}

Let $G$ be a torsion-free group and $\mathbb{Z}G$ the integral group ring.
The trivial unit conjecture for $G$ says that any invertible element (unit)
of $\mathbb{Z}G$ is of the form $\pm g$ for some $g\in G$ (cf. \cite{Pa},
Chapter 13). For solving such a conjecture, to the author's knowledge,
almost all the approaches used are algebraic (cf. \cite{CP} and references
therein). In this note, we give a homotopy theoretic description of such a
conjecture.

Let $X$ be a CW complex with fundamental group $\pi _{1}(X)=G.$ For any
integer $d\geq 2$ and map $f:S^{d}\rightarrow X\vee S^{d}$, we construct a
CW complex $Y_{f}=(X\vee S^{d})\cup _{f}e^{d+1}$. In this note, the
following homotopy theoretic characterization is obtained:

\begin{theorem}
\label{main}Let $G$ be a torsion-free group. The trivial unit conjecture for 
$G$ is true if and only if for an Eilenberg-Mac Lane space $X=BG,$ the
element $[f]\in \pi _{d}(\widetilde{X\vee S^{d}},S^{d})$ (the relative
homotopy group of the universal covering space) vanishes for some lifting of 
$S^{d}$ whenever the inclusion $i_{f}:X\rightarrow Y_{f}$ is a homotopy
equivalence.
\end{theorem}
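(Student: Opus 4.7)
The plan is to translate the homotopy-theoretic vanishing of $[f]$ into an algebraic statement about an element $\tilde{f}\in\mathbb{Z}G$, and to translate ``$i_{f}$ is a homotopy equivalence'' into ``$\tilde{f}$ is a unit in $\mathbb{Z}G$''; the theorem is then an immediate comparison of these two algebraic conditions. Since $X=BG$, the universal cover $\widetilde{X}$ is contractible, so $\widetilde{X\vee S^{d}}\simeq\bigvee_{g\in G}S^{d}_{g}$, a wedge of $d$-spheres indexed by the preimages of the wedge point. For $d\geq 2$ this wedge is simply connected, so the Hurewicz theorem yields
\[
\pi_{d}(X\vee S^{d})\;\cong\;\pi_{d}(\widetilde{X\vee S^{d}})\;\cong\;H_{d}\Bigl(\bigvee_{g\in G}S^{d}_{g}\Bigr)\;\cong\;\mathbb{Z}G,
\]
so every $f:S^{d}\to X\vee S^{d}$ corresponds to a unique $\tilde{f}\in\mathbb{Z}G$. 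For a chosen lift $S^{d}_{g_{0}}$, the long exact sequence of the pair gives $\pi_{d}(\widetilde{X\vee S^{d}},S^{d}_{g_{0}})\cong\mathbb{Z}G/\mathbb{Z}\cdot g_{0}$, with $[f]$ sent to $\tilde{f}\bmod\mathbb{Z}\cdot g_{0}$; in particular $[f]=0$ for this lift if and only if $\tilde{f}=ng_{0}$ for some $n\in\mathbb{Z}$.

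Next I would characterize when $i_{f}$ is a homotopy equivalence. Because $d\geq 2$, attaching a $(d+1)$-cell does not alter $\pi_{1}$, so $i_{f}$ is automatically a $\pi_{1}$-isomorphism; since $X$ is a $K(G,1)$, this reduces to asking whether $\widetilde{Y_{f}}$ is contractible. The universal cover $\widetilde{Y_{f}}$ is obtained from $\widetilde{X\vee S^{d}}$ by attaching one $(d+1)$-cell at each $h\in G$ via the translated lift $h\cdot\tilde{f}$, and tracking this shows that the cellular chain complex of $\widetilde{Y_{f}}$ is the direct sum of $C_{*}(\widetilde{X})$ (already a free $\mathbb{Z}G$-resolution of $\mathbb{Z}$) and the two-term complex $\mathbb{Z}G\xrightarrow{\cdot\tilde{f}}\mathbb{Z}G$ concentrated in degrees $d+1$ and $d$. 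Hence $\widetilde{Y_{f}}$ is contractible iff right multiplication by $\tilde{f}$ is an isomorphism of the left $\mathbb{Z}G$-module $\mathbb{Z}G$, iff $\tilde{f}\in(\mathbb{Z}G)^{\times}$.

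Combining the two steps, both directions follow readily. Forward: assuming TUC and $i_{f}$ a homotopy equivalence, $\tilde{f}$ is a unit, so $\tilde{f}=\pm g_{0}$ for some $g_{0}\in G$; choosing the lift at $g_{0}$ makes $[f]=0$. Conversely, given any unit $u\in\mathbb{Z}G$, realize $u=\tilde{f}$ for some $f$ via the identification above; then $i_{f}$ is a homotopy equivalence, so by hypothesis $[f]=0$ for some lift $S^{d}_{g}$, giving $u=ng$ for some $n\in\mathbb{Z}$. Applying the augmentation $\varepsilon:\mathbb{Z}G\to\mathbb{Z}$ to the unit $u$ forces $n=\varepsilon(u)\in\{\pm 1\}$, so $u=\pm g$. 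The main obstacle I anticipate is the cellular-chain step: one must identify the $\mathbb{Z}G$-equivariant attaching data of $\widetilde{Y_{f}}$ carefully enough to read off that the new boundary $\mathbb{Z}G\to\mathbb{Z}G$ is precisely right multiplication by $\tilde{f}$, and then verify that acyclicity of this two-term complex is equivalent to $\tilde{f}$ being a (two-sided) unit in $\mathbb{Z}G$ rather than merely possessing a one-sided inverse.
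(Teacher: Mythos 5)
Your proposal is correct and follows essentially the same route as the paper: the key step in both is that $i_{f}$ is a homotopy equivalence exactly when the element $\gamma_{f}=\tilde{f}\in\mathbb{Z}G$ governing the two-term relative chain complex $C_{*}(\widetilde{Y_{f}},\widetilde{X})$ is a unit, and that $[f]$ vanishes for the lift at $g_{0}$ exactly when this element lies in $\mathbb{Z}\cdot g_{0}$. Your packaging via $\pi_{d}(X\vee S^{d})\cong\mathbb{Z}G$, the exact sequence of the pair, and the augmentation map is a slightly cleaner rendering of the same argument (the paper instead computes degrees of $p_{h}\circ\tilde{f}$ and builds $f_{\gamma}$ by an explicit equivariant pushout); the only imprecision is that $C_{*}(\widetilde{Y_{f}})$ need not split as a direct sum since $\partial e^{d+1}$ can have a component in $C_{d}(\widetilde{X})$, but this is harmless because acyclicity of $C_{*}(\widetilde{X})$ reduces everything to the relative complex, which is the two-term complex you describe.
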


All modules considered in this note are left modules. Let $\tilde{Y}_{f}$ be
the universal covering space of $Y_{f}$ and $C_{i}(\tilde{Y}_{f})$ the $i$%
-th term of the cellular chain complex of $\tilde{Y}_{f}.$ By definition, $%
C_{i}(\tilde{Y}_{f})$ is a free $\mathbb{Z}G$-module spanned by the set of
all $i$-cells. For the inclusion $i_{f}:X\rightarrow Y_{f},$ we have a
cellular map $\tilde{\imath}_{f}:\tilde{X}\rightarrow \tilde{Y}_{f}$ which
lifts $i_{f}.$ As the map $i_{f}$ induces the identity homomorphism on
fundamental groups of $X$ and $Y_{f},$ we may assume that $\tilde{X}$ is a
subspace of $\tilde{Y}_{f}.$ The relative chain complex $C_{\ast }(\tilde{Y}%
_{f},\tilde{X})$ of $(\tilde{Y}_{f},\tilde{X})$ is of the following form%
\begin{equation*}
0\rightarrow C_{d+1}(\tilde{Y}_{f},\tilde{X})=\mathbb{Z}G\overset{\partial }{%
\rightarrow }C_{d}(\tilde{Y}_{f},\tilde{X})=\mathbb{Z}G\rightarrow 0.
\end{equation*}%
This is a chain complex whose terms are all vanishing except for the $d$-th
term a free $\mathbb{Z}G$-module spanned by $S^{d}$ and the $(d+1)$-th term
a free $\mathbb{Z}G$-module spanned by $e^{d+1}.$ Let $\gamma _{f}=\partial
(1)\in \mathbb{Z}G,$ the unique element determined by the boundary map $%
\partial .$ We give a homotopy theoretic description of units in $\mathbb{Z}%
G $ as follows.

\begin{lemma}
\label{sec}Let $\gamma _{f}\in \mathbb{Z}G$ be the element defined above$.$
Then $\gamma _{f}$ is an invertible element if and only if the inclusion $%
i_{f}:X\hookrightarrow Y_{f}$ is a homotopy equivalence.
\end{lemma}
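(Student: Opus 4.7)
The plan is to use Whitehead-type arguments to reduce the homotopy-equivalence assertion on $i_f$ to the acyclicity of the two-term relative chain complex $C_\ast(\tilde{Y}_f,\tilde{X})$ displayed in the excerpt, and then to observe that this acyclicity is exactly the statement that $\gamma_f$ is a unit of $\mathbb{Z}G$.

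Since $d\geq 2$, wedging with $S^d$ and attaching a single $(d+1)$-cell leaves the fundamental group unchanged, so $\pi_1(Y_f)=G$ and $i_f$ induces the identity on $\pi_1$. By Whitehead's theorem, $i_f$ is a homotopy equivalence if and only if the lifted inclusion $\tilde{\imath}_f:\tilde{X}\hookrightarrow\tilde{Y}_f$ is, and since both universal covers are simply connected CW complexes this is in turn equivalent to $H_\ast(\tilde{Y}_f,\tilde{X})=0$.

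By the excerpt, $C_\ast(\tilde{Y}_f,\tilde{X})$ is the complex $0\to\mathbb{Z}G\xrightarrow{\partial}\mathbb{Z}G\to 0$, where $\partial$ is the left $\mathbb{Z}G$-module homomorphism with $\partial(1)=\gamma_f$; thus $\partial(a)=a\gamma_f$. The homology vanishes if and only if $\partial$ is an isomorphism. If $\gamma_f$ is invertible, then $a\mapsto a\gamma_f^{-1}$ is an explicit inverse. Conversely, if $\partial$ is an isomorphism with inverse $\phi$, then $\phi$ is itself a left $\mathbb{Z}G$-module map, so $\phi(a)=ab$ with $b=\phi(1)$; evaluating $\phi\partial=\mathrm{id}$ and $\partial\phi=\mathrm{id}$ at $1$ yields $\gamma_f b=1$ and $b\gamma_f=1$, exhibiting $b$ as a two-sided inverse of $\gamma_f$ in $\mathbb{Z}G$.

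The steps are essentially formal. The only place requiring genuine care is the Whitehead step: one must check that, because $i_f$ induces an isomorphism on $\pi_1$, the map $i_f$ is a weak equivalence precisely when the lift $\tilde{\imath}_f$ is, and that a relative homology equivalence between simply connected CW complexes is a weak equivalence. Both of these are classical, so modulo them the lemma follows.
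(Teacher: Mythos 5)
Your proof is correct and follows the same overall strategy as the paper: reduce, via Whitehead's theorem and the relative Hurewicz theorem, to the acyclicity of the two-term complex $C_{\ast }(\tilde{Y}_{f},\tilde{X})$, and then translate that acyclicity into invertibility of $\gamma _{f}$. The one place where you genuinely diverge is the final algebraic step of the converse direction. The paper extracts from acyclicity only that $\gamma _{f}$ has a one-sided inverse, and then invokes the nontrivial fact (Passman, Corollary 1.9) that in the integral group ring of a \emph{torsion-free} group a one-sided unit is automatically two-sided. You instead observe that acyclicity of $0\rightarrow \mathbb{Z}G\overset{\partial }{\rightarrow }\mathbb{Z}G\rightarrow 0$ means $\partial $ is an isomorphism of left modules, that its inverse $\phi $ is again given by right multiplication by $b=\phi (1)$, and that evaluating $\phi \partial =\mathrm{id}$ and $\partial \phi =\mathrm{id}$ at $1$ yields $\gamma _{f}b=1=b\gamma _{f}$ directly. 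Your route is both more elementary and slightly more general: it proves the lemma without using torsion-freeness of $G$ at all, whereas the paper's proof leans on it (torsion-freeness remains essential for the conjecture itself and for the main theorem, of course). The Whitehead/Hurewicz step you flag as the delicate point is handled identically in the paper and is indeed classical, so there is no gap.
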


\begin{proof}
All the notations used in this proof are the same as defined before. Suppose
that $\gamma _{f}=\partial (1)$ is an invertible element in $\mathbb{Z}G$.
Then $\partial $ is both injective and surjective, which shows the relative
chain complex $C_{\ast }(\tilde{Y}_{f},\tilde{X})$ is acyclic. This implies
that $\tilde{\imath}_{f}$ induces an isomorphism between the homology groups 
$H_{i}(\tilde{X})$ and $H_{i}(\tilde{Y}_{f})$ for each $i\geq 0.$ Since $%
\tilde{X}$ and $\tilde{Y}_{f}$ are both simply connected, $\tilde{\imath}%
_{f}:\tilde{X}\rightarrow \tilde{Y}_{f}$ is a homotopy equivalence. Since $%
i_{f}$ induces the identity homomorphism on fundamental groups, this shows
that $i_{f}:X\rightarrow Y_{f}$ is a homotopy equivalence by the Whitehead
theorem.

Conversely, suppose that $i_{f}:X\rightarrow Y_{f}$ is a homotopy
equivalence. Then $\tilde{\imath}_{f}:\tilde{X}\rightarrow \tilde{Y}_{f}$ is
a homotopy equivalence, which implies that the relative chain complex $%
C_{\ast }(\tilde{Y}_{f},\tilde{X})$ is acyclic. This implies that $\gamma
_{f}=\partial (1)$ has a left inverse. It is a well-known fact that in the
integral group ring of a torsion-free group, one-sided invertible element is
also two-sided invertible (cf. Corollary 1.9 from \cite{Pa}, p.38). This
finishes the proof.
\end{proof}

\bigskip

\begin{proof}[Proof of Theorem \protect\ref{main}]
Let $X=BG,$ the classifying space of $G.$ Suppose that the trivial unit
conjecture for $G$ is true. For an integer $d\geq 2$ and a map $%
f:S^{d}\rightarrow X\vee S^{d},$ suppose that the CW complex $Y_{f}=(X\vee
S^{d})\cup _{f}e^{d+1}$ has its inclusion $i_{f}:X\rightarrow Y_{f}$ a
homotopy equivalence. By Lemma \ref{sec}, the element $\gamma _{f}$ is a
unit. Therefore, $\gamma _{f}=\pm g$ for some element $g\in G$. As the $d$%
-th and $(d+1)$-th terms of the relative chain complex are free $\mathbb{Z}G$%
-modules, we can view them as submodules of $C_{i}(\tilde{Y})$ $(i=d,d+1$
resp.$).$ Since $\tilde{X}$ is a free $G$-CW complex and $S^{d}$ is simply
connected, the universal covering space $\widetilde{X\vee S^{d}}$ could be
taken as the push out the following diagram%
\begin{equation*}
\begin{array}{ccc}
G\times \mathrm{pt} & \rightarrow & \tilde{X} \\ 
\downarrow &  & \downarrow \\ 
G\times S^{d} & \rightarrow & \tilde{X}\vee _{G}(G\times S^{d}).%
\end{array}%
\end{equation*}%
Since $X=BG$ is aspherical, $\tilde{X}$ is contractible. This implies that
there is a homotopy equivalence $\tilde{X}\vee _{G}(G\times S^{d})\simeq
\vee _{G}S^{d},$ where $\vee _{G}S^{d}$ is the wedge of copies of $S^{d}$
indexed by $G.$ For any element $h\in G,$ let $p_{h}:\widetilde{X\vee S^{d}}$
$\rightarrow S^{d}$ be the projection onto the $h$-component of $\vee
_{G}S^{d}.$ Consider a lifting $\tilde{f}$ of $f$ to the universal covering
space as shown in the following diagram%
\begin{equation*}
\begin{array}{ccc}
&  & \widetilde{X\vee S^{d}} \\ 
& \tilde{f}\nearrow & \downarrow \\ 
S^{d} & \overset{f}{\rightarrow } & X\vee S^{d}.%
\end{array}%
\end{equation*}%
This $\tilde{f}$ actually determines the $(d+1)$-\textrm{th} boundary map in
the chain complex of $\tilde{Y}_{f}.$ By the definition of the boundary map $%
\partial $, the degree of the composition%
\begin{equation*}
S^{d}\overset{\tilde{f}}{\rightarrow }\widetilde{X\vee S^{d}}\overset{p_{h}}{%
\rightarrow }S^{d}
\end{equation*}%
is zero when $h\neq g$ or $\pm 1$ when $h=g.$ Therefore, $\tilde{f}$ is
homotopic to some map $\tilde{g}$ whose image occupies only the $g$%
-component $S^{d}.$ This shows that $[f]:=[\tilde{f}]\in \pi _{d}(\widetilde{%
X\vee S^{d}},S^{d})$ is vanishing, where $S^{d}$ is viewed as the $g$%
-component $S^{d}$.

Conversely, suppose that $\gamma $ is a nontrivial invertible element in $%
\mathbb{Z}G.$ We will construct some map $f_{\gamma }:S^{d}\rightarrow X\vee
S^{d}$ such that the inclusion $i_{f_{\gamma }}:X\rightarrow Y_{f}$ is a
homotopy equivalence but $[f_{\gamma }]\in \pi _{d}(\widetilde{X\vee S^{d}}%
,S^{d})$ is not vanishing for any lifting of $S^{d}$. Assume that $\gamma
=\sum a_{g}g$ for $g\in G$ and $a_{g}\in \mathbb{Z}$. As in the first part
of this proof, the universal covering space $\widetilde{X\vee S^{d}}=\tilde{X%
}\vee _{G}(G\times S^{d})$ could be a free $G$-CW complex. Let $p_{h}:\tilde{%
X}\vee _{G}(G\times S^{d})\simeq \vee _{G}S^{d}\rightarrow S^{d}$ be the
projection onto the $h$-component. Define $\tilde{f}_{\gamma
}:S^{d}\rightarrow \tilde{X}\vee _{G}(G\times S^{d})\simeq \vee _{G}S^{d}$
as a cellular map such that the degree of the composition%
\begin{equation*}
S^{d}\overset{\tilde{f}_{\gamma }}{\rightarrow }\widetilde{X\vee S^{d}}%
\overset{p_{h}}{\rightarrow }S^{d}
\end{equation*}%
is $a_{h}$ for each $h\in G.$ Denote by 
\begin{equation*}
\phi _{\gamma }:G\times S^{d}\rightarrow \tilde{X}\vee _{G}(G\times S^{d})
\end{equation*}%
the unique $G$-equivariant map determined by $\tilde{f}_{\gamma }.$ Note
that $\phi _{\gamma }$ is a $G$-equivariant between two free $G$-CW
complexes. Passing to the quotient space, we get a map $f_{\gamma
}:S^{d}\rightarrow \tilde{X}\vee _{G}(G\times S^{d})/G=X\vee S^{d}$ such
that the following diagram is commutative%
\begin{equation*}
\begin{array}{ccc}
&  & \widetilde{X\vee S^{d}} \\ 
& \tilde{f}_{\gamma }\nearrow  & \downarrow  \\ 
S^{d} & \overset{f_{\gamma }}{\rightarrow } & X\vee S^{d}.%
\end{array}%
\end{equation*}%
Construct a free $G$-CW complex $\widetilde{Y}_{\gamma }=\widetilde{X\vee
S^{d}}\cup _{\phi _{\gamma }}(G\times e^{d+1})$ as the push out of the
following diagram%
\begin{equation*}
\begin{array}{ccc}
G\times S^{d} & \overset{\phi _{\gamma }}{\rightarrow } & \tilde{X}\vee
_{G}(G\times S^{d}) \\ 
\downarrow  &  & \downarrow  \\ 
G\times e^{d+1} & \rightarrow  & \widetilde{Y_{f}}.%
\end{array}%
\end{equation*}%
This $G$-CW complex $\widetilde{Y_{\gamma }}$ is actually the universal
cover of $Y_{\gamma }:=\tilde{Y}_{\gamma }/G$ (for more details on the
construction, see the proof of Lemma 2.2 in \cite{Lu1} or p.371 in \cite{Lu2}%
). According to Lemma \ref{sec}, the inclusion $i_{f}:X\rightarrow Y_{\gamma
}$ is a homotopy equivalence, since $\gamma $ is a unit. Let $%
i_{g}:S^{d}\hookrightarrow \widetilde{X\vee S^{d}}=\tilde{X}\vee
_{G}(G\times S^{d})$ be the inclusion of $S^{d}$ into the $g$-component. As $%
\gamma $ is nontrivial, the map $\tilde{f}_{\gamma }$ is not homotopic to
any map $S^{d}\rightarrow S^{d}\overset{i_{g}}{\hookrightarrow }\widetilde{%
X\vee S^{d}}=\tilde{X}\vee _{G}(G\times S^{d})$ for any $g\in G$ by
considering the degree of $p_{h}\tilde{f}_{\gamma }$ for each $h\in G.$ This
shows that $[f_{\gamma }]:=[\tilde{f}_{\gamma }]\in \pi _{d}(\widetilde{%
X\vee S^{d}},S^{d})$ is not vanishing for any lifting of $S^{d}$.
\end{proof}

\begin{remark}
For zero divisor conjecture in $\mathbb{Z}G,$ some necessary conditions of
homotopy descriptions are given in \cite{Iv} and \cite{Le}.
\end{remark}

\noindent \textbf{Acknowledgement}

This note was finished when the author was a Phd student in National
University of Singapore (NUS). He is grateful to his thesis advisor
Professor A.J. Berrick for many discussions and constant encouragement.

\bigskip

\bigskip

Mathematics and Physics Centre, Xi'an Jiaotong-Liverpool University, 111 Ren
Ai Road, Suzhou, Jiangsu 215123, China.

E-mail: Shengkui.Ye@xjtlu.edu.cn

Mathematical Institute, University of Oxford, 24-29 St Giles', Oxford, OX1
3LB, U.K.

E-mail: Shengkui.Ye@maths.ox.ac.uk

\end{document}